\documentclass[12pt,a4paper]{article}
\usepackage[centertags]{amsmath}
\usepackage{amsfonts}

\usepackage{graphics}
\usepackage{amssymb}
\usepackage{amsthm}
\usepackage{newlfont}
\usepackage{epsfig}
\usepackage{amsmath}
\usepackage{graphicx}
\usepackage{makeidx}
\usepackage{mathtools}
\usepackage{listings}
\usepackage{tikz}

\theoremstyle{plain}
\newtheorem{thm}{Theorem}[section]

\newtheorem{lem}[thm]{Lemma}

\theoremstyle{definition}

\theoremstyle{remark} \tolerance=10000 \hbadness=10000
\vbadness=10000 \textwidth  5.7in \textheight 7.9in \topmargin
-0.1in

\lstdefinestyle{mystyle}{
    basicstyle=\ttfamily\tiny,
    breakatwhitespace=false,
    breaklines=true,
}

\lstset{style=mystyle}

\def \ni{\noindent}
\title{\bf On Geodesic Leech Labeling of Some Graph Classes}
\author {Aparna Lakshmanan S.\footnote{E-mail : aparnals@cusat.ac.in, aparnaren@gmail.com} ~and Arun J. Manattu\footnote{arunjmanattu@gmail.com}\\
	Department of Mathematics\\
	Cochin University of Science and Technology\\
    Cochin - 22, Kerala, India.
}
\date{}
\begin{document}
\maketitle
\begin{abstract}
\ni\line(1,0){360}\\
Let $f:E\rightarrow \{1,2,3,\dots\}$ be an edge labeling of $G$. The geodesic path number of $G$, $t_{gp}(G)$, is the number of geodesic paths in $G$. An edge labeling $f$ is called a geodesic Leech labeling, if the set of weights of the geodesic paths in $G$ is $\{1,2,3,\dots,t_{gp}(G)\}$, where the weight of a path $P$ is the sum of the labels assigned to the edges of $P$. A graph which admits a geodesic Leech labeling is called a geodesic Leech graph. Otherwise, we call it a non-geodesic Leech graph. In this paper, we prove that cycles $C_n$, $n \geq 5$ are non-geodesic Leech graphs. We also prove that there are at most three regular complete bipartite graphs that are geodesic Leech. We show that degree sequence cannot characterize geodesic Leech graphs. The geodesic path number of the wheel graph $W_n$ is obtained and the geodesic Leech labeling of $W_5$ and $W_6$ is given.
\ni\line(1,0){360}\\
\ni {\bf Keywords:}  Leech Labeling, Geodesic path number, Geodesic Leech Labeling, Geodesic Leech Graph\\

\ni {\bf AMS Subject Classification:} {\bf 05C78}\\
\ni\line(1,0){360}\\
\end{abstract}

\section{Introduction}
By a graph $G=(V,E)$ we mean a finite simple undirected graph. The order $\left|V\right|$ and
the size $\left|E\right|$ of $G$ are denoted by $n$ and $m$ respectively.
For all graph theoretic terminology and notations not mentioned here, we refer to Balakrishnan and Ranganathan
\cite{Bal}.\\

Let $f:E\rightarrow \{1,2,3,\dots\}$ be an edge labeling of $G$. The
weight of a path $P$ in $G$ is the sum of labels of the edges
of $P$ and is denoted by $w(P)$. In 1975, Leech \cite{Lee} introduced the
concept of Leech trees, utilizing the unique property of a tree that there exists a unique path between every pair of vertices.\\

Let $T$ be a tree of order $n$. An edge labeling $f:E\rightarrow
\{1,2,3,\dots\}$ is called a Leech labeling if the weights of the
$^nC_2$ paths in $T$ are exactly 1, 2,\dots,$^nC_2$. A tree which
admits a Leech labeling is called a Leech tree. Since each edge
label is the weight of a path of length one, it follows that $f$
is an injection and 1, 2 are edge labels for all $n \geq 3$. Leech
gave examples of five Leech trees in \cite{Lee} itself and these are
the only known Leech trees till date. Apart from the different negative results about the existence of Leech trees \cite{Sze} \cite{See5} \cite{See6}, a positive result that stands out is from Herbert Taylor \cite{Tay} which states that a tree with $n$ vertices is Leech only if $n$ is either a perfect square or two added to a perfect square.\\

Some variations to Leech labeling have since been studied at various junctures of time. In 2007, Bill Calhoun and his team published a paper on the notion of minimal distinct distance trees\cite{Bil}. David Leach worked extensively on the idea of Generalized Leech trees \cite{Lea} and also made contributions towards the notion of Modular Leech trees\cite{Lea2}. As recently as in 2022, the parameter Leech index of a tree was introduced\cite{See3}, which denoted how close a tree is towards being Leech.\\

Though the concept of Leech labeling was motivated by a problem in electrical networks, where $n-1$ resistors can be used to obtain all resistances from $1$ to $^nC_2$, as there are only 5 Leech trees identified till date, the Leech labeling became almost useless. Keeping this in mind, in \cite{See}, the concept of Leech labeling was extended to the class of all graphs in two different ways, each of which is interesting for various reasons. In this paper, we focus on one of those extensions, namely geodesic Leech labeling.\\

The path number, $t_{p}(G)$ of a graph $G$ is the total number of paths in a graph $G$ \cite{See}. Let	$f:E\rightarrow \{1,2,3,\dots\}$ be an edge labeling of $G$. If the set of weights of the  $t_{p}(G)$  paths in $G$ is $\{1,2,3,\dots, t_{p}(G)\}$, then $f$ is called a Leech labeling of $G$ and a graph which admits a Leech labeling is called a Leech graph \cite{See}. There are only very few published works exploring Leech Labeling of graphs. \cite{See} \cite{See4} \cite{Mer}. Numerous problems are still wide open in this area for potential research. \\'

The geodesic path number, $t_{gp}(G)$ of a graph $G$ is the total number of geodesic paths in a graph $G$ \cite{See}. Let	$f:E\rightarrow \{1,2,3,\dots\}$ be an edge labeling of $G$. If the set of weights of the  $t_{gp}(G)$ geodesic paths in $G$ is $\{1,2,3,\dots, t_{gp}(G)\}$, then $f$ is called a geodesic Leech labeling of $G$ and a graph which admits a geodesic Leech labeling is called a geodesic Leech graph \cite{See}. A graph that does not admit such a labeling is called a non-geodesic Leech graph. The value of $t_{gp}(G)$ for various families of graphs is given in \cite{See} and it is observed that cycles of length 3 and 4, $K_n$ and $K_n - e$, where $e$ is any edge, for every $n$ are geodesic Leech graphs. In \cite{See2}, four infinite families of geodesic Leech graphs are given and it is observed that $C_5$ is not a geodesic Leech graph. It was left as an open problem to prove that $C_n$'s and $K_{m,n}$'s are non-geodesic Leech graphs except for finitely many.\\

The paper \cite{See2} concludes by defining almost geodesic Leech graphs gaining inspiration from \cite{Oze} in which Ozen et al. defined almost Leech trees by introducing a minor relaxation to the condition for being a Leech tree. Let	$f:E\rightarrow \{1,2,3,\dots\}$ be an edge labeling of $G$. If the set of weights of the  $t_{gp}(G)$ geodesic paths in $G$ is missing exactly one element from $\{1,2,3,\dots, t_{gp}(G)\}$ and one path weight is repeated, then $f$ is called an almost geodesic Leech labeling of $G$ and a graph which admits an almost geodesic Leech labeling is called an almost geodesic Leech graph.\\

In this paper, we prove that cycles of length greater than 4 are non-geodesic Leech graphs, settling the open problem given in \cite{See2}. We also prove that there are at most 3 regular complete bipartite graphs that are geodesic leech. We also show that degree sequence doesn't characterize geodesic Leech graphs using the two cubic graphs of six vertices. The geodesic path number of wheel graph $W_n$ is obtained and the geodesic Leech labeling of $W_5$ and $W_6$ is also given. We believe that $W_n$ for $n \geq 7$ are non-geodesic Leech graphs and this is left as an open problem. \\
\par For the reader to have a feel for the notion of geodesic Leech labeling, we have given the figures of some geodesic Leech graphs and their labeling in Appendix II. One could observe that neither geodesic Leech labeling nor almost geodesic Leech labeling of a graph need to be unique. We have also shown the labeling of some almost geodesic Leech graphs. In fact, even though our results prove the existence of infinite families of non geodesic Leech graphs, there are very many graphs with the property of being geodesic Leech as already shown in the paper \cite{See2}. 8 out of the 9 forbidden subgraphs of line graphs are geodesic Leech (See \ref{Figure 4}). We also observed that graphs with at most 5 vertices are either Geodesic Leech or almost Geodesic Leech. The edge labelings that validate our observations have been provided in Appendix II. \\

\section{Edge-Transitive Graphs}

\begin{lem}
    Let G be an edge transitive graph with each edge labelled $a_i$ appearing in exactly $k$ geodesic paths and let $f$  be a Geodesic Leech labeling of G, then
    \begin{equation*}
        k\sum_{i=1}^na_i = \frac{t_{gp}(G)(t_{gp}(G)+1)}{2}.
    \end{equation*}
\end{lem}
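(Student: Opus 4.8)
The plan is to compute the sum of the weights of all geodesic paths of $G$ in two different ways and then equate the two expressions. The left-hand side of the claimed identity will arise from an edge-by-edge accounting, while the right-hand side will come directly from the defining property of a geodesic Leech labeling.

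First I would use the hypothesis that $f$ is a geodesic Leech labeling. By definition, the multiset of weights $\{w(P) : P \text{ a geodesic path in } G\}$ is exactly $\{1,2,3,\dots,t_{gp}(G)\}$. Summing these weights therefore gives the triangular number
\begin{equation*}
\sum_{P} w(P) = 1 + 2 + \dots + t_{gp}(G) = \frac{t_{gp}(G)\bigl(t_{gp}(G)+1\bigr)}{2},
\end{equation*}
where the sum ranges over all geodesic paths $P$ of $G$. This accounts for the right-hand side of the identity and requires nothing beyond the Leech condition.

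Next I would recompute the same sum by interchanging the order of summation, expanding each weight as $w(P) = \sum_{e \in P} f(e)$ and collecting the contributions edge by edge. Each edge with label $a_i$ contributes $a_i$ to the weight of every geodesic path that passes through it. By the standing hypothesis, each edge lies in exactly $k$ geodesic paths, so the total contribution of the edge labelled $a_i$ to $\sum_P w(P)$ is precisely $k\,a_i$. Summing over all $n$ edges yields
\begin{equation*}
\sum_{P} w(P) = \sum_{i=1}^{n} k\,a_i = k\sum_{i=1}^{n} a_i.
\end{equation*}
Equating the two expressions for $\sum_P w(P)$ gives the desired identity.

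The argument is essentially a clean double-counting, so there is no serious analytic obstacle; the one point that deserves care is the justification of the uniform count $k$. Here edge-transitivity is what guarantees that every edge lies in the same number of geodesic paths, since an automorphism carrying one edge to another sends geodesics to geodesics and hence preserves the number of geodesic paths through an edge. In the statement this uniformity is already assumed, so in the proof I would simply invoke it; the main thing to emphasize is that the interchange of summation is valid because each geodesic path is a finite set of edges and $G$ is finite.
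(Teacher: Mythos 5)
Your double-counting argument is correct and is exactly the paper's approach: the authors dispose of this lemma with a one-line remark that the identity "arises from the computation of geodesic path weights in two different ways," which is precisely the interchange of summation you carry out in detail. Your write-up simply makes explicit what the paper leaves implicit, including the remark on why edge-transitivity yields the uniform count $k$.
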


\par This equality arises from the computation of geodesic path weights in two different ways since every edge in an edge transitive graph appears in equal number of geodesic paths. \\

The following lemma gives the value of $t_{gp}(C_n)$.

\begin{lem} \label{tgp} \cite{See}
	Let $C_n = (v_1,v_2,...,v_{n}, v_1)$. Then,\\ \[ t_{gp}(C_n) =
\begin{dcases}
      k(2k+1) & if\ n = 2k+1 \\
      2k^2 & if\ n = 2k \\
     \end{dcases}
\]
\end{lem}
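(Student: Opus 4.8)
The plan is to count the geodesic paths of $C_n$ directly, organizing the count according to the distance between the two endpoints of each path. The structural fact I would use is that any two distinct vertices $v_i$ and $v_j$ of a cycle are joined by exactly two internally disjoint arcs, whose lengths are $d(v_i,v_j)$ and $n-d(v_i,v_j)$, and that the geodesics between them are precisely the arc or arcs of minimum length. Consequently a pair contributes a single geodesic whenever the two arcs have different lengths, and exactly two distinct geodesics precisely when the two arcs have equal length, that is, when $d(v_i,v_j)=n/2$.

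First I would count, for a fixed vertex $v$, the number of vertices lying at each distance $d$ with $1 \le d \le \lfloor n/2 \rfloor$. Moving clockwise and counterclockwise from $v$ shows that there are two vertices at distance $d$ for every $d$ strictly less than $n/2$, and, when $n$ is even, exactly one vertex at the maximal distance $d=n/2$, namely the antipode of $v$. Passing from ordered to unordered pairs then yields $n$ unordered pairs at each distance $d<n/2$ and $n/2$ antipodal pairs in the even case.

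In the odd case $n=2k+1$ no two vertices are antipodal, so every one of the $\binom{2k+1}{2}$ pairs determines a unique geodesic, giving $t_{gp}(C_n)=\binom{2k+1}{2}=k(2k+1)$. In the even case $n=2k$, the pairs at distance $d<k$ contribute one geodesic each while the $k$ antipodal pairs contribute two apiece, so that $t_{gp}(C_n)=n(k-1)+2k=2k^2$; summing over the distance classes is the only arithmetic needed.

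The computation itself is elementary; the step requiring care is the even case, where I must verify that the two length-$k$ arcs joining an antipodal pair are genuinely distinct geodesic paths (they traverse disjoint edge sets) and that they are not conflated with the single geodesics arising from the shorter pairs. Making the count rigorous therefore rests entirely on the clean dichotomy ``unique shorter arc versus two equal arcs,'' which in turn depends only on the parity of $n$ together with the convention that a geodesic path is an undirected shortest path between a pair of distinct vertices.
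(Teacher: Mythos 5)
Your proposal is correct. Note, however, that the paper itself offers no proof of this lemma at all: it is quoted directly from the earlier paper of Varghese, Lakshmanan and Arumugam (reference \cite{See}), so there is no internal argument to compare yours against. Your distance-class count is a clean and complete verification: for odd $n=2k+1$ the two arcs joining any pair have lengths of opposite parity, so exactly one is a geodesic and the count is $\binom{2k+1}{2}=k(2k+1)$; for even $n=2k$ the $n(k-1)$ pairs at distance less than $k$ contribute one geodesic each while the $k$ antipodal pairs contribute two, giving $2k(k-1)+2k=2k^2$. Both totals match the stated formula, and your bookkeeping is consistent with how the paper counts geodesics elsewhere (e.g.\ in its claim that $t_{gp}(K_{n,n})=n^3$, distinct geodesics between the same pair of vertices are counted separately), so the antipodal pairs contributing two paths apiece is exactly the right convention. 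The only point worth stating explicitly, which you already flag, is that the two length-$k$ arcs between antipodes are distinct paths because they share no internal vertices or edges; with that observed, the proof is complete.
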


The geodesic Leech labeling of cycles of length 3 and 4 is given in Figure 1. In this section, we prove that all the remaining cycles are non-geodesic Leech graphs. Throughout this section, let $C_n$ be the cycle with vertex set $\{v_1,v_2,\dots,v_n\}$ and $v_i$ adjacent to $v_{i+1}$ for $i = 1,2,\dots,n$, let $f$ be a geodesic Leech labeling of $C_n$ and let $f(v_iv_{i+1}) = a_i$, for $i = 1,2,\dots,n$ where addition of the indices is taken modulo $n$.

 \begin{figure}[h]\center \label{fig3}
	 	\includegraphics[width=8cm]{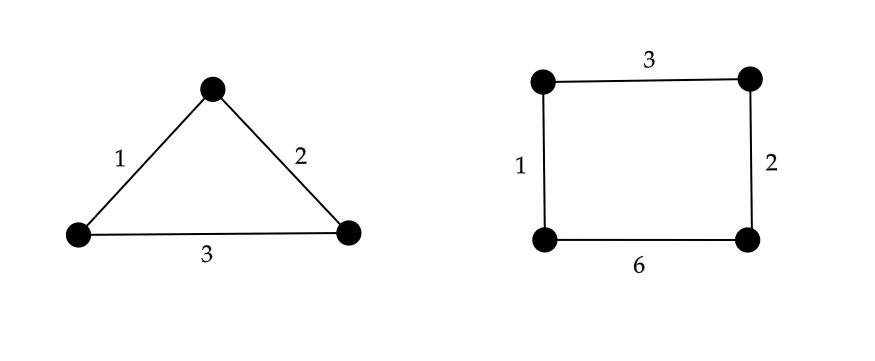}\\[-0.3cm]
	 	\caption{Geodesic Leech labeling of $C_3$ and $C_4$}
	 \end{figure}

\begin{lem} \label{lemma1}
  If $C_n$ is a geodesic Leech cycle, then $n = 3, 4$ or $10$.
\end{lem}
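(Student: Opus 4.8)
The plan is to combine the edge-transitive counting identity (the first lemma of this section) with the explicit value of $t_{gp}(C_n)$ from Lemma \ref{tgp}, and to reduce the whole question to a divisibility condition on $n$. Since $C_n$ is edge-transitive, every edge lies in the same number $r$ of geodesic paths, so that lemma gives $r\,S=\tfrac12\,t_{gp}(C_n)\big(t_{gp}(C_n)+1\big)$, where $S=\sum_{i=1}^{n}a_i$ is the sum of all edge labels. The crucial point is that $r$ is completely determined by the geometry of the cycle, whereas $S$ must be a positive integer; forcing the right-hand side to be divisible by $r$ will eliminate all but finitely many $n$.

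First I would compute $r$ in both parities. Writing $n=2k+1$ or $n=2k$ as in Lemma \ref{tgp}, the geodesic paths are exactly the shorter arcs, of lengths $1,2,\dots,k$; for each length $\ell$ there are $n$ such arcs, each using $\ell$ edges, so the number of (edge, geodesic) incidences is $n\sum_{\ell=1}^{k}\ell$ in the odd case. In the even case the only change is that the $k$ antipodal pairs each contribute two geodesics of length $k$, but the two arcs of such a pair together cover every edge exactly once, so the incidence total has the same form. Dividing by the $n$ edges gives $r=\tfrac{k(k+1)}{2}$ in both cases.

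Substituting $t_{gp}$ from Lemma \ref{tgp} and cancelling the common factor $k$ then yields a clean equation. In the odd case $n=2k+1$ one obtains $(k+1)S=(2k+1)(2k^2+k+1)$, and reducing modulo $k+1$ (using $2k+1\equiv-1$ and $2k^2+k+1\equiv 2$) forces $(k+1)\mid 2$, hence $k\le 1$ and, for a genuine cycle, $n=3$. In the even case $n=2k$ one obtains $(k+1)S=2k(2k^2+1)$, and reducing modulo $k+1$ (using $2k\equiv-2$ and $2k^2+1\equiv 3$) forces $(k+1)\mid 6$, hence $k\in\{1,2,5\}$ and $n\in\{2,4,10\}$; discarding the degenerate $n=2$ leaves $n\in\{4,10\}$. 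Combining the two cases gives $n\in\{3,4,10\}$, as required.

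The bulk of the work is therefore routine counting and modular arithmetic once the identity is in hand, and the whole conclusion flows from the integrality of $S$. The only genuine obstacle is pinning down $r$ correctly; in particular one must be careful that the antipodal geodesics in even cycles are counted with the right multiplicity, since that is the sole place where the two-shortest-path phenomenon enters and where an off-by-a-factor error would change the final divisor $6$.
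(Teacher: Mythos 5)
Your proof is correct and follows essentially the same route as the paper: both double-count the total geodesic path weight using the fact that each edge of $C_n$ lies in exactly $\frac{k(k+1)}{2}$ geodesics (with $k=\lfloor n/2\rfloor$), equate this with $\frac{t_{gp}(t_{gp}+1)}{2}$, and let the integrality of $\sum a_i$ force the divisibility that pins down $n\in\{3,4,10\}$. Your explicit reductions modulo $k+1$ giving $(k+1)\mid 2$ and $(k+1)\mid 6$ are a welcome sharpening of the paper's unexplained ``on simplification'' step, but they are a refinement of the same argument, not a different one.
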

\begin{proof}
   We count the sum of weights of all geodesic paths in two different ways. Note that the number of geodesic paths of length $l$ containing a fixed edge $e$ is $l$. Therefore, the total number of geodesic paths containing an edge $e$ is $1+2+\dots+diam(G) = \frac{d(d+1)}{2}$, where $d = diam(G) = \lfloor\frac{n}{2}\rfloor$. Hence, the sum of weights of all geodesic paths will be $\frac{d(d+1)}{2}\sum_{i=1}^na_i$.\\

  Since $f$ is a geodesic Leech labeling, the weights of geodesic paths are precisely 1, 2, $\dots, t_{gp}(G)$. Therefore, the sum of weights of all geodesic paths must be $\frac{t_{gp}(G)(t_{gp}(G)+1)}{2}$. Hence, $\frac{d(d+1)}{2}\sum_{i=1}^na_i = \frac{t_{gp}(G)(t_{gp}(G)+1)}{2}$.\\

\ni{\bf Case 1:} $n = 2k+1$\\

In this case, $t_{gp}(G) = 2k^2+k$ and $d=k$. Therefore, we have $\frac{k(k+1)}{2}\sum_{i=1}^na_i = \frac{(2k^2+k)(2k^2+k+1)}{2}$. Since, $\sum_{i=1}^na_i$ is a natural number, $\frac{k(k+1)}{2}$ must divide $\frac{(2k^2+k)(2k^2+k+1)}{2}$. On simplification, we can see that this is possible only for $k=1$, which in turn gives $n=3$.\\

\ni{\bf Case 2:} $n = 2k$\\

In this case, $t_{gp}(G) = 2k^2$ and $d=k$, so that $\frac{k(k+1)}{2}$ must divide $\frac{2k^2(2k^2+1)}{2}$. On simplification, we get $k =$ 1, 2 or 5. Since, $n \geq 3$, we have, $n =$ 4 or 10.\\

Hence the lemma follows.
\end{proof}

Figure 1 gives the geodesic Leech labeling of $C_3$ and $C_4$ and hence the only question left is whether $C_{10}$ is a geodesic Leech graph or not? We have written a python programme (given as Appendix I in this paper), which confirms that $C_{10}$ do not admit any geodesic Leech labeling. To write the programme, we have used the fact that, if a geodesic Leech labeling of $C_{10}$ exists, then the maximum edge label cannot exceed 31.\\

\begin{lem}\label{lemma2}
  If a geodesic Leech labeling of $C_{10}$ exists, then the maximum edge label cannot exceed 31.
\end{lem}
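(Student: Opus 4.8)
The plan is to bound the maximum edge label of $C_{10}$ using the fundamental counting identity established in Lemma~\ref{lemma1}. For $C_{10}$ we have $k=5$, $n=2k=10$, diameter $d=5$, and $t_{gp}(C_{10}) = 2k^2 = 50$. The key equation from the previous lemma gives $\frac{d(d+1)}{2}\sum_{i=1}^{10} a_i = \frac{t_{gp}(t_{gp}+1)}{2}$, that is $15\sum_{i=1}^{10} a_i = \frac{50\cdot 51}{2} = 1275$, which forces $\sum_{i=1}^{10} a_i = 85$. This fixes the total edge weight exactly, and the whole argument will hinge on extracting an upper bound on a single label from this fixed sum together with the Leech condition that all $50$ geodesic path weights are distinct positive integers filling $\{1,2,\dots,50\}$.

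First I would record that the largest weight among all geodesic paths must be exactly $50$, since the weights are precisely $1,2,\dots,50$. Next I would identify which geodesic paths can attain this maximum. In $C_{10}$ the geodesics are the arcs of length $1,2,3,4,5$; a geodesic of length $5$ is a ``half-cycle'' consisting of five consecutive edges, and its complementary arc is the other five edges. Since the two complementary half-cycles partition all ten edges, their weights sum to $\sum_{i=1}^{10} a_i = 85$. Therefore any length-$5$ geodesic has weight at most $85 - (\text{sum of the five complementary edges})$, and because each of those complementary edges is a positive integer with distinct labels (the single-edge geodesics themselves must receive distinct weights, so the ten labels are distinct positive integers), the complementary five edges sum to at least $1+2+3+4+5 = 15$. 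Hence every length-$5$ geodesic weight is at most $85 - 15 = 70$, and more refined bounds of this flavor will be needed.

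The heart of the argument is to bound a single maximum \emph{edge} label $a_{\max}$. I would argue that the edge carrying the largest label lies in many geodesic paths, and that the weights of these paths must all be distinct elements of $\{1,\dots,50\}$. A fixed edge lies in exactly $\frac{d(d+1)}{2} = 15$ geodesic paths (as computed in Lemma~\ref{lemma1}), and all $15$ of these weights are distinct integers in $\{1,\dots,50\}$ each at least $a_{\max}$ (since every such path contains the edge of label $a_{\max}$ plus nonnegative further contribution, in fact at least $a_{\max}$). Fifteen distinct integers each $\ge a_{\max}$ and all $\le 50$ force $a_{\max} \le 50 - 14 = 36$; I would then sharpen this by observing that the additional edges appearing alongside $a_{\max}$ in the longer geodesics force the larger of these $15$ weights strictly above $a_{\max}$ by controlled amounts, pushing the bound down to $31$. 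Concretely, among the $15$ paths through the max-edge there is one of length $1$ (weight $a_{\max}$), and the remaining $14$ have strictly larger weight; accounting for the minimum possible increments coming from the other distinct positive labels yields $a_{\max}+\text{(sum of smallest forced increments)} \le 50$.

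The main obstacle will be making the increment bookkeeping in the last step tight enough to reach exactly $31$ rather than a weaker bound like $36$. The difficulty is that the $14$ longer geodesics through the max-edge share edges with one another, so their weight increments over $a_{\max}$ are not independent; one cannot simply add $1+2+\cdots+14$. I would handle this by working with the two directions separately: the geodesics of each fixed length $\ell\in\{2,3,4,5\}$ through the max-edge come in a bounded family, and I would track the minimal total label mass that the other nine edges must supply, using $\sum_{i=1}^{10}a_i = 85$ to cap how large the distant-edge contributions can be while still keeping all ten labels distinct positive integers. Balancing the ``the weights must reach up to $50$'' constraint against ``the total is only $85$'' is exactly what pins $a_{\max}\le 31$, and getting this balance precise is where the real care is required.
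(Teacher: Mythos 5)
Your proposal correctly reproduces the groundwork (the identity $15\sum a_i = 1275$, hence $\sum a_i = 85$, and the pigeonhole on the $15$ geodesics through the maximum-label edge giving $a_{\max}\le 36$), but it does not actually reach $31$, and you concede as much: the ``increment bookkeeping'' on the $14$ longer geodesics through the max edge is exactly the step you flag as problematic, and it is not the mechanism that works. The paper's proof needs no such bookkeeping. The missing idea is to use the five length-$5$ geodesics that \emph{avoid} the maximum-label edge $e$. You already derived the key relation --- complementary half-cycles have weights summing to $85$ --- but you applied it in the wrong direction: since every geodesic weight is at most $50$, the complement relation gives every length-$5$ geodesic weight a \emph{lower} bound of $85-50=35$ (your deduction of the upper bound $70$ from distinctness of the labels is true but useless here). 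With that, the argument closes in one stroke: the $15$ paths containing $e$ have distinct weights $\ge M$, the $5$ length-$5$ paths avoiding $e$ have distinct weights $\ge 35$, and all $20$ of these weights are distinct integers at most $50$. If $M\ge 32$, all $20$ weights would lie in $\{32,33,\dots,50\}$, a set of only $19$ integers --- a contradiction. Hence $M\le 31$.

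So the gap is concrete: your plan bounds $M$ only by examining paths \emph{through} the maximal edge, where the shared-edge dependencies you worry about genuinely obstruct a tight count; the paper instead enlarges the family of constrained paths by bringing in paths \emph{disjoint} from $e$, whose weights are forced up by the complementation argument rather than by containing a heavy edge. Without that second family (or some equivalent substitute), the route you describe stalls at $36$.
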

If $f$ is a geodesic Leech labeling of $C_{10}$, then it follows from the proof of Lemma \ref{lemma1} that $w(f) = \sum_{i=1}^{10}a_i = 85$. Also, from Lemma \ref{tgp}, $t_{gp}(C_{10}) = 50$. Also, if a path of length five has weight $w$, then its complement path must have weight $85 - w$. Therefore, the path weight of a path of length five must be at least $85 - 50 = 35$. Now, every edge $e$ belongs to 15 geodesic paths and there are five geodesic paths of length five which do not contain the edge $e$. Let $f$ be a geodesic Leech labeling of $C_{10}$, if it exists, and let $e$ be the edge with maximum label $M$. Then, all the 15 geodesic paths which contains the edge $e$ must have path weight greater than or equal to $M$ and the five geodesic paths of length 5, which do not contain the edge $e$ also must have path weight greater than or equal to 35. Since, the maximum geodesic path weight is 50, it follows that $M$ is at most 31.

\begin{thm}\label{cycle}
  The cycles $C_n$ are non-geodesic Leech graphs, except for $n = 3$ and 4.
\end{thm}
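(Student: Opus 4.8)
The plan is to combine the two lemmas already established in this section to pin down the complete list of geodesic Leech cycles. By Lemma~\ref{lemma1}, if $C_n$ admits a geodesic Leech labeling then necessarily $n \in \{3,4,10\}$, so the only cases that remain open are whether $C_3$, $C_4$, and $C_{10}$ actually are geodesic Leech. The cases $n=3$ and $n=4$ are immediately settled in the affirmative by the explicit labelings exhibited in Figure~1, so the entire content of the theorem reduces to ruling out $C_{10}$.

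To dispose of $C_{10}$, I would invoke Lemma~\ref{lemma2}, which bounds the maximum edge label of any hypothetical geodesic Leech labeling of $C_{10}$ by $31$. Together with the constraint $\sum_{i=1}^{10} a_i = 85$ coming from the proof of Lemma~\ref{lemma1}, this reduces the search space to a finite set of candidate label assignments: ten positive integers summing to $85$, each at most $31$. The verification that none of these finitely many labelings produces the required weight set $\{1,2,\dots,50\}$ is exactly what the Python program in Appendix~I performs. I would therefore state that an exhaustive computer search over this bounded finite space confirms that $C_{10}$ admits no geodesic Leech labeling.

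Putting these pieces together yields the theorem: the necessary condition $n \in \{3,4,10\}$ from Lemma~\ref{lemma1}, the positive constructions for $n=3,4$ from Figure~1, and the computational elimination of $n=10$ via the bound of Lemma~\ref{lemma2} leave $n=3$ and $n=4$ as the only geodesic Leech cycles, and hence every $C_n$ with $n\geq 5$ is non-geodesic Leech.

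The main obstacle here is purely the finite case $C_{10}$, since it is the sole survivor of the arithmetic divisibility argument that does not yield to an explicit construction or a clean contradiction. Because $C_{10}$ resists a short impossibility proof, the argument must lean on the bound in Lemma~\ref{lemma2} to make the search finite and then defer to the exhaustive enumeration in Appendix~I; the quality of the proof rests on the tightness of that bound keeping the computation tractable rather than on any further structural insight.
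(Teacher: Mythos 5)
Your proposal is correct and follows exactly the paper's own route: Lemma~\ref{lemma1} narrows the possibilities to $n \in \{3,4,10\}$, Figure~1 settles $n=3,4$ affirmatively, and $C_{10}$ is eliminated by the exhaustive computer search of Appendix~I, made finite by the bound $M \leq 31$ of Lemma~\ref{lemma2} together with $\sum_{i=1}^{10} a_i = 85$. There is no substantive difference between your argument and the paper's.
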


\begin{lem} \cite{See}
    $t_{gp}(K_{n,n}) = n^3  (n^2$ edges and $n^3 - n^2$ two edge paths)

\end{lem}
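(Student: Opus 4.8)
The plan is to count the geodesic paths directly by partitioning them according to their length, exploiting the fact that $K_{n,n}$ has diameter $2$. Writing $A$ and $B$ for the two colour classes, each of size $n$, every vertex of $A$ is adjacent to every vertex of $B$, so two vertices in different classes are at distance $1$, while two distinct vertices in the same class are at distance $2$ (they share every vertex of the opposite class as a common neighbour). Hence every geodesic path in $K_{n,n}$ has length $1$ or $2$, and I would count these two types separately.

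First I would count the geodesic paths of length $1$: these are precisely the single edges, and since each of the $n$ vertices of $A$ is joined to each of the $n$ vertices of $B$, there are exactly $n^2$ of them. Next I would count the geodesic paths of length $2$. Such a path has the form $uwv$ (edges $uw$ and $wv$), where $u,v$ lie in the same class and $w$ lies in the opposite class; it is a geodesic precisely because $u$ and $v$ are at distance $2$. To enumerate these, I would choose the class containing the two endpoints ($2$ choices), choose the unordered pair of endpoints from that class ($\binom{n}{2}$ choices), and choose the middle vertex from the opposite class ($n$ choices), giving $2\binom{n}{2}n = n^{2}(n-1) = n^{3}-n^{2}$ paths.

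Summing the two counts yields $t_{gp}(K_{n,n}) = n^{2} + (n^{3}-n^{2}) = n^{3}$, as claimed. The only point requiring care is the observation that a single pair of non-adjacent vertices gives rise to many distinct geodesics, one through each common neighbour, so the length-$2$ paths must be counted by the triple (unordered pair of endpoints, middle vertex) rather than simply by pairs of vertices at distance $2$; this is exactly the feature that inflates the count beyond $\binom{2n}{2}$ and accounts for the factor $n$ in $n^{3}-n^{2}$. One should also record the trivial boundary case $n=1$, where $K_{1,1}$ is a single edge with one geodesic path and $1 = 1^{3}$, so the formula holds there as well.
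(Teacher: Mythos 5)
Your proof is correct and follows exactly the decomposition the paper itself indicates: the lemma (cited from an earlier paper) records $n^2$ edges and $n^3-n^2$ two-edge geodesic paths, which are precisely your length-$1$ and length-$2$ counts $n^2$ and $2\binom{n}{2}n$. Nothing differs in substance; your write-up merely supplies the routine details the paper leaves implicit.
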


Each edge of $K_{n,n}$ is present in $2n - 1$ geodesic paths, hence
\begin{equation*}
    (2n - 1) \sum_{i=1}^na_i = \frac{(n^3)(n^3+1)}{2}.
\end{equation*}
The sum of edge weights is always a natural number, hence this equality is satisfied only if $n = 1,2$ or $5$. The first one is $K_2$ and the second one is $C_4$, both of whom are already shown to be geodesic Leech. The computation shows that the only other possible geodesic Leech regular complete bipartite graph is $K_{5,5}$.
\begin{thm}
 Regular Complete Bipartite Graphs $K_{n,n}$ are not Geodesic Leech for $n \neq 1,2,5$
\end{thm}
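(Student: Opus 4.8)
The plan is to run the same double-counting strategy used for cycles in Lemma \ref{lemma1}, now exploiting that $K_{n,n}$ is edge-transitive, so that every edge lies in the same number of geodesic paths. First I would record the two ingredients supplied by the preceding lemma: there are $t_{gp}(K_{n,n}) = n^3$ geodesic paths in all (the $n^2$ single edges together with the $n^3 - n^2$ geodesics of length two), and each edge lies in exactly $2n-1$ of them, namely itself and the $2(n-1)$ length-two geodesics obtained by extending it at either endpoint. Summing the weight $w(P)$ over all geodesic paths and regrouping the total edge-by-edge then gives $(2n-1)\sum_{e} f(e)$. On the other hand, if $f$ were a geodesic Leech labeling, the weight multiset would be exactly $\{1,2,\dots,n^3\}$, whose sum is $\frac{n^3(n^3+1)}{2}$. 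Equating the two counts produces the Diophantine constraint
\begin{equation*}
  (2n-1)\,W = \frac{n^3(n^3+1)}{2}, \qquad W := \sum_{e} f(e) \in \{1,2,3,\dots\}.
\end{equation*}

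The heart of the argument is then purely number-theoretic: I must pin down exactly when $2n-1$ divides $\frac{n^3(n^3+1)}{2}$. Since $2n-1$ is odd, the factor $2$ in the denominator is harmless, and the condition is equivalent to $2n-1 \mid n^3(n^3+1)$. Working modulo $2n-1$, the congruence $2n \equiv 1$ gives $(2n)^3 = 8n^3 \equiv 1$, whence
\begin{equation*}
  64\,n^3(n^3+1) = (8n^3)\,(8n^3+8) \equiv 1\cdot(1+8) = 9 \pmod{2n-1}.
\end{equation*}
Because $\gcd(64,\,2n-1)=1$, the divisibility $2n-1 \mid n^3(n^3+1)$ is equivalent to $2n-1 \mid 9$. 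The positive divisors of $9$ are $1$, $3$, $9$, which correspond precisely to $n=1$, $n=2$, $n=5$.

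Consequently, for every $n \notin \{1,2,5\}$ the displayed equation has no solution with $W$ a natural number, so no edge labeling can realize the full weight set $\{1,\dots,n^3\}$; hence $K_{n,n}$ is non-geodesic Leech, as claimed. I expect the only point requiring care to be the two reductions in the modular step: discarding the denominator $2$, which is legitimate because $2n-1$ is odd, and clearing the coefficient $64$, which is legitimate because $\gcd(64,2n-1)=1$. Neither $K_{1,1}=K_2$ nor $K_{2,2}=C_4$ is excluded, consistent with their known geodesic Leech labelings, and the status of the surviving candidate $K_{5,5}$ is not decided by this necessary condition, so the argument yields at most three geodesic Leech members of this family.
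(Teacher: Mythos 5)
Your proposal is correct and follows essentially the same route as the paper: the same double count over geodesic paths of the edge-transitive graph $K_{n,n}$ (using $t_{gp}(K_{n,n})=n^3$ and each edge lying in $2n-1$ geodesics) leading to $(2n-1)\sum_i a_i = \frac{n^3(n^3+1)}{2}$, followed by the divisibility restriction to $n\in\{1,2,5\}$. In fact your modular computation showing $2n-1\mid n^3(n^3+1)$ is equivalent to $2n-1\mid 9$ supplies explicitly the number-theoretic step that the paper only asserts.
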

\begin{lem}
    The edge transitive cubic graph on 6 vertices is not Geodesic Leech.
    \begin{proof}
        Let G be the edge transitive cubic graph on 6 vertices,
        $t_{gp}(G) = 27 $ (9 edges and 18 two-edge geodesic paths.)
        Each edge is present in 4 two-edge geodesic paths, which implies
        \begin{equation*}
            5 \sum_{i=1}^9a_i = \frac{27 \times 28}{2} = 378.
        \end{equation*}
        This is impossible since $\sum_{i=1}^na_i$ is always a natural number.
    \end{proof}

\end{lem}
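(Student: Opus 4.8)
The plan is to apply the edge-transitive counting identity established in the opening lemma of this section, combined with a divisibility obstruction, mirroring the arguments already used for $C_n$ in Lemma~\ref{lemma1} and for $K_{n,n}$. First I would identify the graph in question. Among connected cubic graphs on six vertices there are exactly two, namely $K_{3,3}$ and the triangular prism $K_3\,\square\,K_2$; of these only $K_{3,3}$ is edge transitive, since the prism has two edge orbits (its six triangle edges each lie on a $3$-cycle, while its three rungs lie on none, so no automorphism can interchange the two types). Hence the statement is about $K_{3,3}$.

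Next I would compute the two parameters entering the identity. As $K_{3,3}$ has diameter $2$, every geodesic path has length $1$ or $2$. The nine edges give the length-$1$ geodesics. Two vertices are at distance $2$ precisely when they lie in the same part, giving $2\binom{3}{2}=6$ such pairs, and each pair is joined by $3$ distinct shortest paths, one through each vertex of the opposite part; this yields $18$ length-$2$ geodesics and so $t_{gp}(K_{3,3})=27$, in agreement with the cited value $n^3$ at $n=3$. For the edge-incidence count, a fixed edge $a_ib_j$ is itself one length-$1$ geodesic and lies in the four length-$2$ geodesics $a_i\!-\!b_j\!-\!a_k$ ($k\neq i$) and $b_l\!-\!a_i\!-\!b_j$ ($l\neq j$), so each edge sits in exactly $k=5$ geodesic paths; edge transitivity guarantees this count is the same for every edge.

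Substituting these values into the identity $k\sum_{i}a_i=\tfrac{1}{2}t_{gp}(t_{gp}+1)$ gives $5\sum_{i=1}^{9}a_i=\tfrac{27\cdot 28}{2}=378$. Since $378$ is not divisible by $5$, the edge-sum $\sum a_i$ cannot be a natural number, contradicting the fact that $f$ takes positive integer values. Therefore $K_{3,3}$ admits no geodesic Leech labeling, which is the assertion.

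The argument hides no genuine difficulty once the parameters are correct; it collapses to a single non-divisibility. The only delicate point, and the one I would double-check most carefully, is the combinatorial bookkeeping: each distance-$2$ pair must be counted with its full multiplicity of three shortest paths, and each edge must be credited with all four of its length-$2$ geodesics. Treating geodesics as mere distance pairs would undercount both $t_{gp}$ and $k$ and break the divisibility contradiction, so this is the step where care is essential.
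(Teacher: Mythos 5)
Your proof is correct and follows essentially the same route as the paper: the edge-transitive counting identity gives $5\sum_{i=1}^{9}a_i = \tfrac{27\cdot 28}{2} = 378$, and the failure of $5$ to divide $378$ yields the contradiction. The only difference is that you explicitly identify the graph as $K_{3,3}$ and justify the counts ($t_{gp}=27$, each edge in $5$ geodesics), which the paper merely asserts.
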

\vspace{1.5cm}
The following labeling (See Figure \ref{Figure 7}) shows that the non edge transitive cubic graph on 6 vertices is a Geodesic Leech graph. \\

\begin{figure}[h]\center \label{fig7}
  \includegraphics[width=6.5cm]{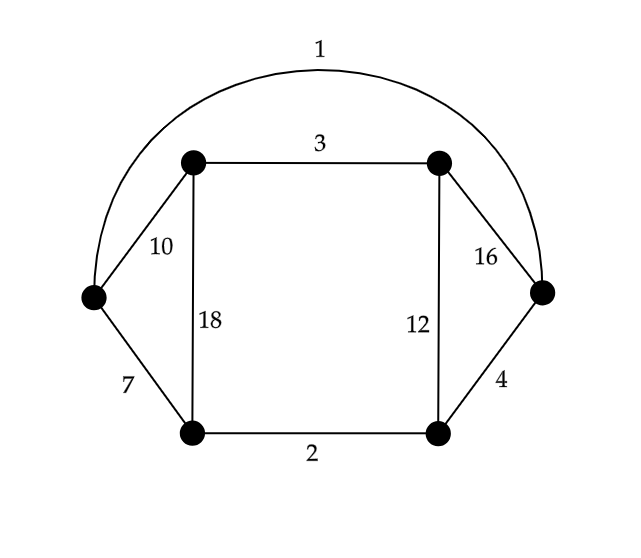}\\[-1cm]
  \caption{Geodesic Leech Cubic graph on 6 vertices}\label{Figure 7}
\end{figure}

   \begin{thm}
       Degree sequence does not characterize Geodesic Leech Graphs.
   \end{thm}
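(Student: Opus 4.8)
The plan is to exhibit two non-isomorphic graphs with the same degree sequence, exactly one of which is geodesic Leech; this immediately shows that the degree sequence cannot decide membership in the class, which is the assertion of the theorem. The natural witnesses are the two connected cubic graphs on six vertices, namely the complete bipartite graph $K_{3,3}$ and the triangular prism (the unique non-edge-transitive cubic graph on six vertices, depicted in Figure~\ref{Figure 7}). Both are $3$-regular on six vertices, so both carry the degree sequence $(3,3,3,3,3,3)$, and it remains only to show that one of them is geodesic Leech while the other is not.

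First I would dispose of $K_{3,3}$. Among the two cubic graphs on six vertices it is the edge-transitive one, so the preceding lemma applies directly: recall its integrality obstruction $5\sum_{i=1}^{9} a_i = 378$, which has no solution in positive integers, whence $K_{3,3}$ is not geodesic Leech. This half needs no new computation.

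The substance of the proof is to show that the triangular prism is geodesic Leech by producing and verifying the labeling of Figure~\ref{Figure 7}. Here I would first compute its geodesic path number. The prism has diameter $2$; its nine edges account for the length-one geodesics, and the six top-to-bottom vertex pairs that are \emph{not} joined by a vertical edge are each connected by exactly two distinct length-two geodesics, giving $9 + 12 = 21$ geodesic paths in all. I would then evaluate the weight of each of these $21$ paths under the displayed edge labels and confirm that the resulting multiset of weights is precisely $\{1,2,\dots,21\}$.

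The main obstacle is this verification. Unlike in a tree, the prism has pairs of vertices joined by two distinct geodesics, so for each such pair both paths must be listed and shown to receive different weights, and one must check that no two of the $21$ weights coincide and that none of the values $1,\dots,21$ is skipped. A convenient consistency check is that the weights must sum to $\frac{21 \times 22}{2} = 231$; since the prism is not edge-transitive, its six triangle edges lie in $3$ geodesic paths each and its three vertical edges in $5$ each, so this total equals $3\,(\text{sum of triangle labels}) + 5\,(\text{sum of vertical labels})$. The two degrees of freedom here, in contrast with the single rigid quantity $5\sum a_i$ forced on $K_{3,3}$, are exactly what permit the prism to attain a geodesic Leech labeling, and once a witnessing labeling is exhibited the theorem follows.
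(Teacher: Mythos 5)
Your proposal matches the paper's argument exactly: the paper proves the edge-transitive cubic graph on six vertices ($K_{3,3}$) is not geodesic Leech via the same divisibility obstruction $5\sum_{i=1}^{9}a_i = 378$, and certifies the triangular prism as geodesic Leech by exhibiting the labeling in Figure~\ref{Figure 7}, so the two cubic graphs on six vertices witness the theorem. Your added bookkeeping (the prism has $t_{gp}=21$, with triangle edges on $3$ geodesics each and vertical edges on $5$ each, summing to $231$) is correct and only strengthens the verification the paper leaves to the figure.
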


\section{Concluding Remarks and Open Problems}

In this paper, we have proved that the cycles $C_n$ for $n \geq 5$ are non-geodesic Leech graphs. We also proved that there are at most three regular complete bipartite graphs that are geodesic Leech. Though the results are not positive with respect to application point of view, they settle one of the problems suggested in \cite{See2}. We also showed that degree sequence does not characterize geodesic Leech graphs.\\

The wheel graph on $n$ vertices, $W_n$ is the cycle $C_{n-1}$ together with a vertex adjacent to all the vertices of $C_{n-1}$. In our search for more geodesic Leech graphs, we have obtained geodesic Leech labeling of the wheel graphs $W_5$ and $W_6$ (See Figure \ref{Figure 1}). But, we think that $W_n$ for $n \geq 7$ are non-geodesic Leech graphs. To check whether a wheel graph is a geodesic Leech graph or not, what we need first is the geodesic path number of $W_n$. The following theorem gives the geodesic path number of $W_n$ for $n \geq 5$.

\begin{thm}
  The geodesic path number of the wheel graph $W_n$ is $t_{gp}(W_n) =\frac{(n-1)(n+2)}{2}$.
\end{thm}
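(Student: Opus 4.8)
The plan is to exploit the very small diameter of $W_n$. For $n\ge 5$ the rim is the cycle $C_{n-1}$ on at least four vertices, the hub is adjacent to every rim vertex, and any two non-adjacent rim vertices are joined through the hub; hence every pair of vertices is at distance $1$ or $2$ and $diam(W_n)=2$. Consequently every geodesic path has length $1$ or $2$, and I would count these two types separately and add them.

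First I would count the geodesics of length $1$. These are exactly the edges of $W_n$, namely the $n-1$ rim edges of $C_{n-1}$ together with the $n-1$ spokes from the hub, giving $2(n-1)$ in all. Each such edge is the unique shortest path between its (adjacent) endpoints, so there is no overcounting.

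The substantive step is counting the geodesics of length $2$, that is, the paths $x$--$y$--$z$ with $d(x,z)=2$. A direct pair-by-pair count is where the difficulty lies: a pair of rim vertices at distance $2$ need not have a unique shortest path, since besides the hub they may share a common rim neighbour, and the number of such neighbours depends on how far apart the two vertices lie on the cycle, degenerating further on the small rim $C_4$, where each of the two antipodal pairs acquires three common neighbours. To sidestep this case analysis I would instead count \emph{all} paths of length two (all copies of $P_3$, via $\sum_{v}\binom{\deg v}{2}$) and subtract the non-geodesic ones. The hub has degree $n-1$ and each of the $n-1$ rim vertices has degree $3$, so the number of length-$2$ paths is $\binom{n-1}{2}+3(n-1)$. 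A length-$2$ path fails to be a geodesic precisely when its two endpoints are adjacent, i.e.\ when it is one of the three two-edge paths sitting inside a triangle (obtained by designating each vertex of the triangle as the midpoint). For $n\ge 5$ the only triangles of $W_n$ are the $n-1$ triangles formed by the hub with a rim edge, since $C_{n-1}$ itself contains no triangle; this yields $3(n-1)$ non-geodesic length-$2$ paths, leaving $\binom{n-1}{2}$ geodesics of length two.

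Finally I would add the two contributions, $2(n-1)+\binom{n-1}{2}$, and simplify to $\frac{(n-1)(n+2)}{2}$, the claimed value. The main obstacle, as flagged above, is the varying multiplicity of shortest paths between distance-$2$ rim vertices; the count-and-subtract argument is designed precisely to absorb all of these configurations, including the exceptional behaviour at $W_5$, into one uniform computation, after which the remaining algebra is routine. A useful sanity check before writing the final line is to verify the formula against the small case $W_5$ ($2\cdot 4+\binom{4}{2}=14=\tfrac{4\cdot 7}{2}$), which also matches the explicit geodesic Leech labelling of $W_5$ mentioned in the paper.
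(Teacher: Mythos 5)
Your proposal is correct, and its skeleton---observing that $W_n$ has diameter two for $n\geq 5$ and splitting the count into geodesics of length $1$ and length $2$---is the same as the paper's; the difference lies in how the length-$2$ geodesics are counted. The paper counts them directly according to their middle vertex: each rim vertex is the centre of exactly one geodesic two-edge path (the one running along the rim), giving $n-1$, and the hub is the centre of one geodesic for each non-adjacent pair of rim vertices, giving $\frac{(n-1)(n-4)}{2}$, so that $t_{gp}(W_n)=3(n-1)+\frac{(n-1)(n-4)}{2}$. You instead count \emph{all} two-edge paths via $\sum_{v}\binom{\deg v}{2}=\binom{n-1}{2}+3(n-1)$ and subtract the $3(n-1)$ paths lying inside the $n-1$ triangles of $W_n$ (exactly the two-edge paths with adjacent endpoints), leaving $\binom{n-1}{2}$ geodesics of length two. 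Both computations are valid and give the same total; your complementary count has the small advantage of not needing to classify which rim pairs are adjacent or how many geodesics join a given pair (all multiplicities, including the degenerate rim $C_4$ of $W_5$, are absorbed automatically), while the paper's direct count by centre is marginally shorter. Both arguments also correctly count \emph{paths} rather than pairs of endpoints, which matters here since rim vertices at rim-distance two are joined by more than one geodesic.
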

\begin{proof}
  The diameter of $W_n$ for $n \geq 5$ is two. The number of geodesic paths of length 1 is the number of edges in $W_n$ which is $2(n-1)$. The number of geodesic paths of length two completely contained in $C_{n-1}$ is $n-1$. The number of geodesic paths of length two with the universal vertex as the central vertex is $\frac{(n-1)(n-4)}{2}$. Therefore, $t_{gp}(W_n) = 3(n-1)+\frac{(n-1)(n-4)}{2}=\frac{n^2+n-2}{2} =\frac{(n-1)(n+2)}{2}$ .
\end{proof}

\begin{figure}[h]\center \label{fig1}
  \includegraphics[width=6.5cm]{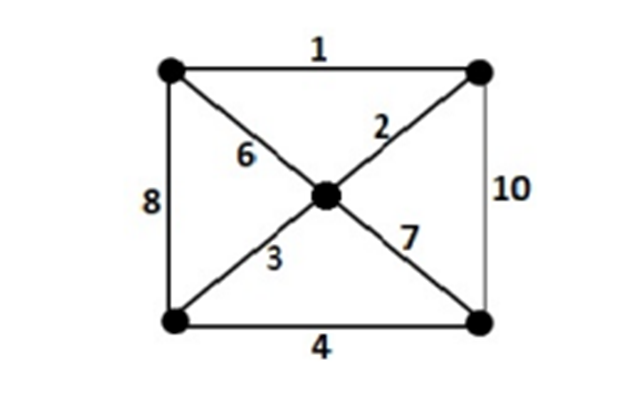}
  \includegraphics[width=6.5cm]{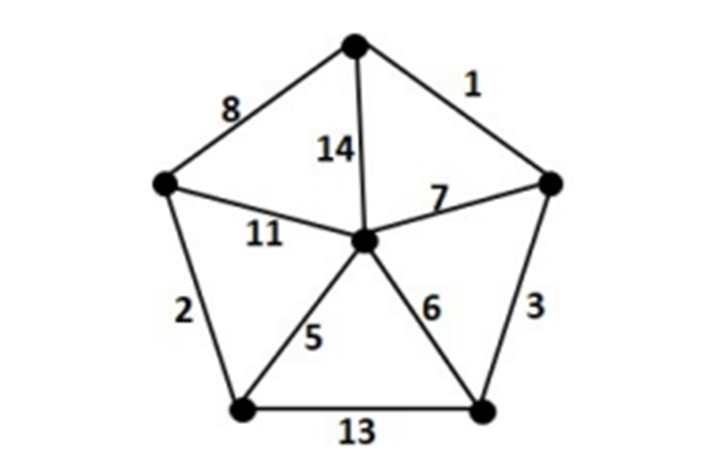}\\[0.3cm]
  \caption{Geodesic Leech labeling of $W_5$ and $W_6$}\label{Figure 1}
\end{figure}

$W_7$ is an almost geodesic Leech graph from the following labeling (Figure \ref{Figure 6}), but whether or not it is Geodesic Leech remains to be seen.

\begin{figure}[h] \label{fig6}
    \centering
    \includegraphics[width=6cm]{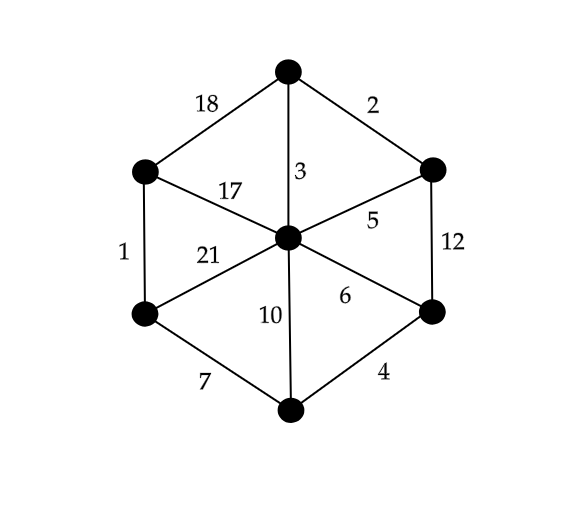}\\[-0.6cm]
    \caption{Almost Geodesic Leech Labeling of $W_7$}
    \label{Figure 6}
\end{figure}

\ni{\bf Problem:} The wheel graph $W_n$ for $n \geq 7$ is not a geodesic Leech graph.\\

\ni{\bf Acknowledgement:} The first author thank Cochin University of Science and Technology for granting project under Seed Money for New Research Initiatives (order No.CUSAT/PL(UGC).A1/1112/2021) dated 09.03.2021. The second author would like to place on record, his gratitude towards Council of Scientific \& Industrial Research (CSIR) for the financial assistance received (File No. 09/0239(17181)/2023-EMR-I).\\

{}
\newpage
\begin{center}
  \Large{\bf APPENDIX I}
  \end{center}
\lstinputlisting[language=Python]{C10NonGeodesic.py}
\newpage
\begin{center}
  \Large{\bf APPENDIX II}
  \end{center}

\begin{figure}[h]\center \label{fig4}
  \includegraphics[width=3.5cm]{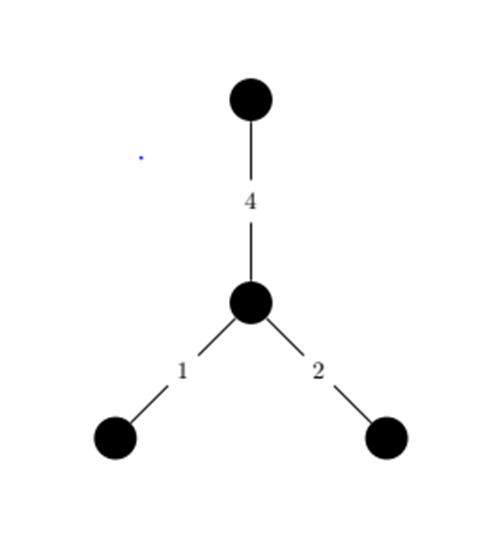}
  \includegraphics[width=6cm]{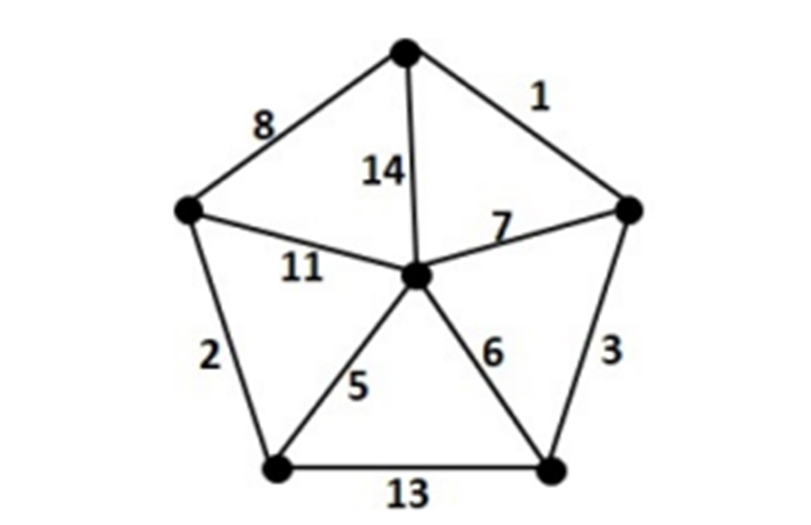}
  \includegraphics[width=3.2cm]{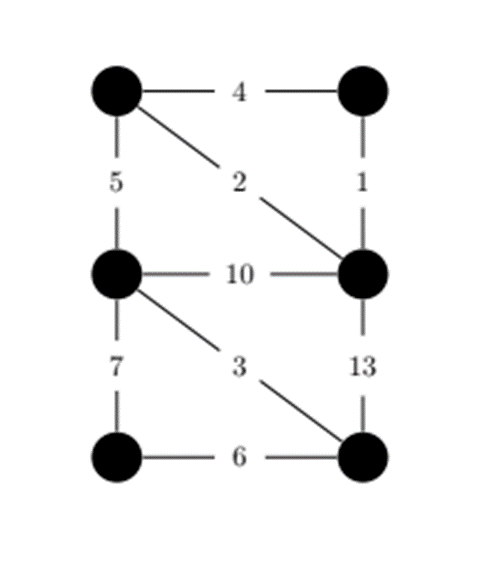}
  \includegraphics[width=6.75cm]{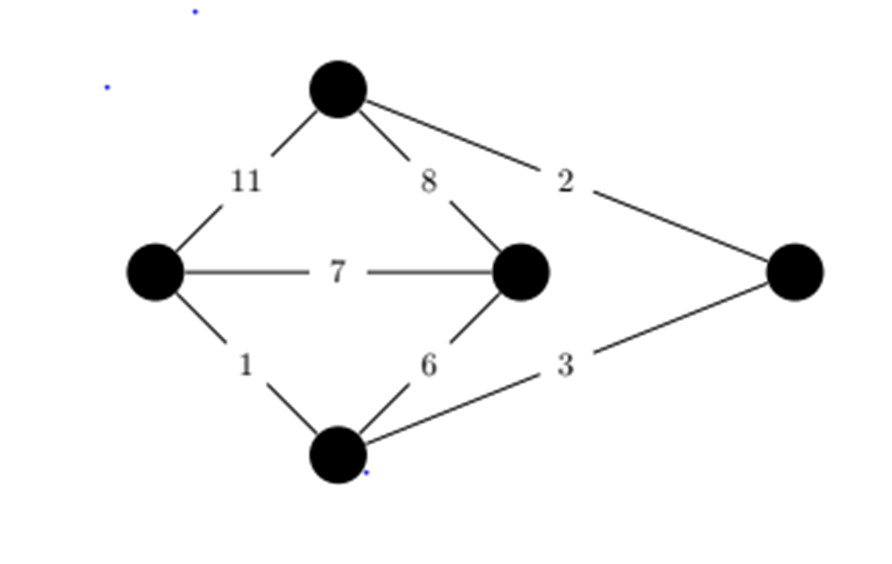}
  \includegraphics[width=6cm]{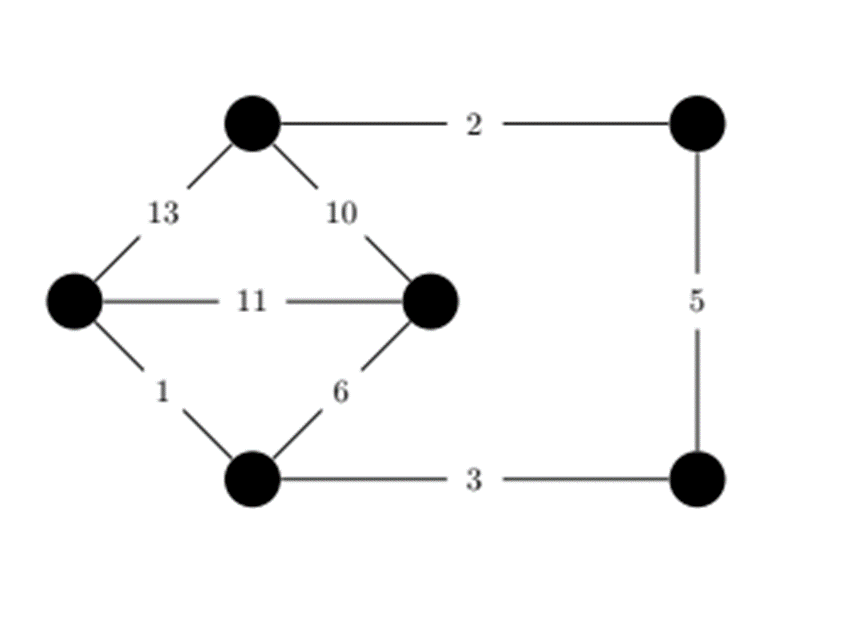}
  \includegraphics[width=6cm]{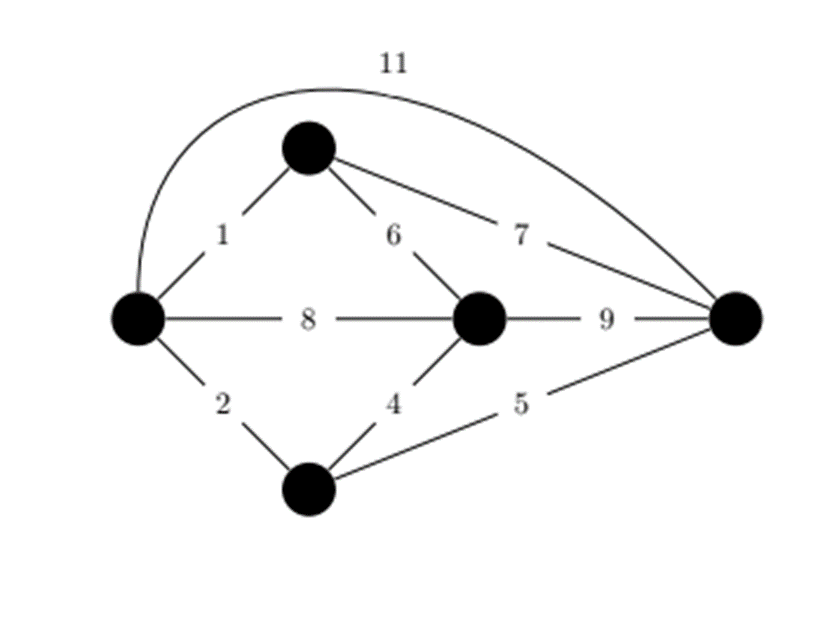}
  \includegraphics[width=4cm]{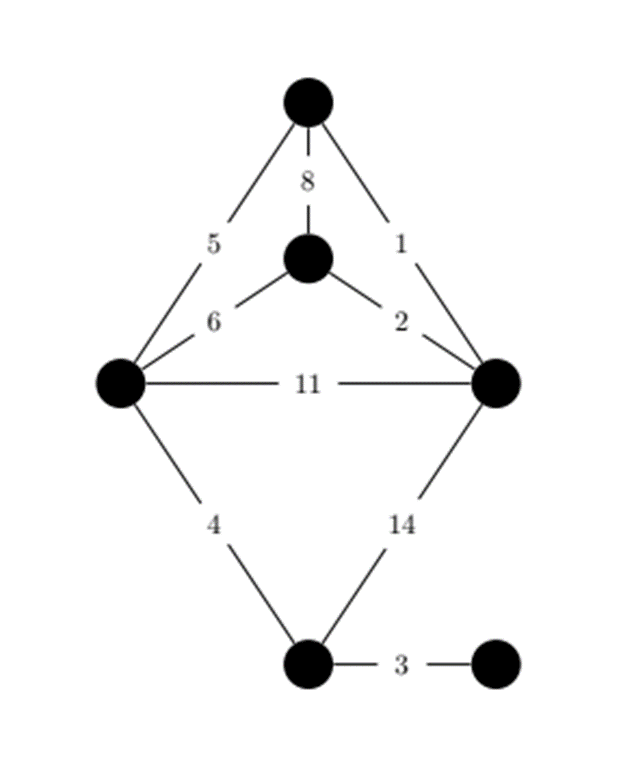}
  \includegraphics[width=4cm]{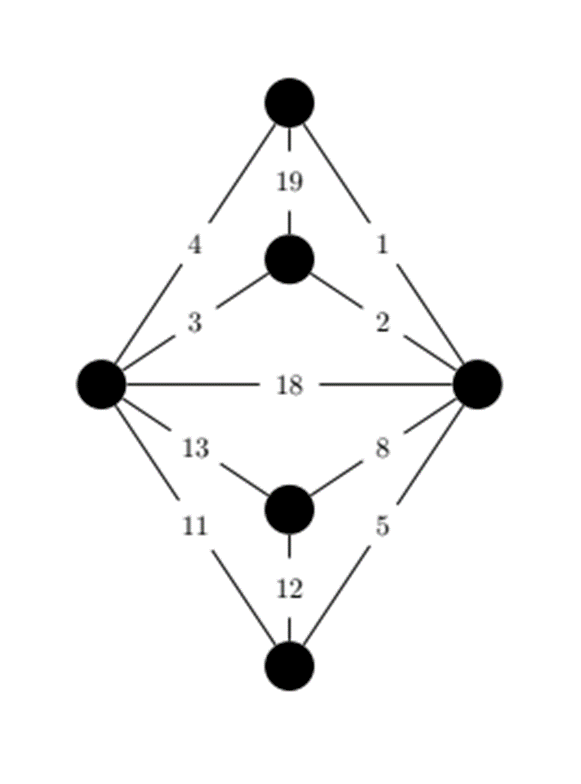}
  \caption{Geodesic Leech labeling of 8 Forbidden Subgraphs of Linegraphs}\label{Figure 4}
\end{figure}

The one forbidden subgraph of linegraphs which is not in the above list was verified using a computer program to be non geodesic Leech. But we observed that it is almost geodesic Leech\cite{See2} and found two distinct labelings for the same (See Figure \ref{Figure 5}).

\begin{figure}[ht]\center \label{fig5}
\includegraphics[width=6.5cm]{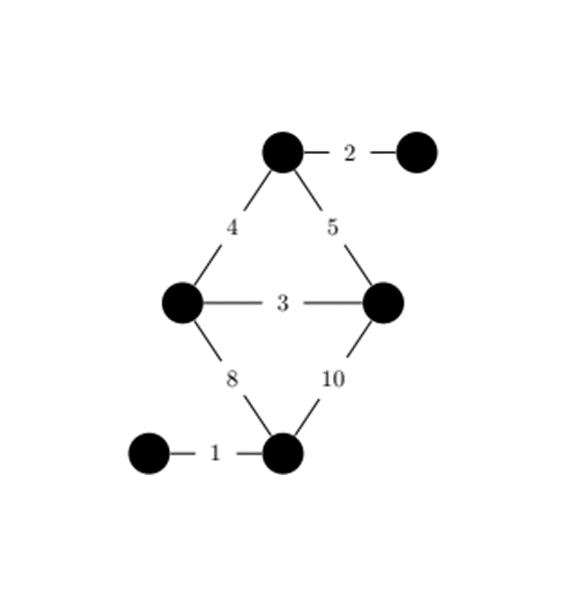}
  \includegraphics[width=6cm]{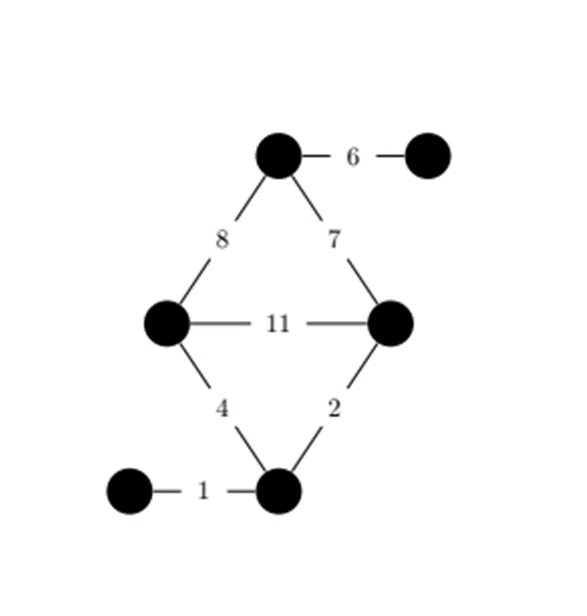}
  \caption{An Almost Geodesic Leech Graph with two distinct Labelings}\label{Figure 5}
\end{figure}
\vspace{2.5cm}
\subsection*{\centering\textbf{\underline{Some Geodesic Leech Graphs}}}
\vspace{1cm}

\tikzset{every picture/.style={line width=0.75pt}} 



\end{document}